\newcommand*{\rom}[1]{\expandafter\@slowromancap\romannumeral #1@}
\theoremstyle{plain} 
\newtheorem{theorem}{Theorem}
\newtheorem{lemma}{Lemma}
\newtheorem{assumption}{Assumption}
\theoremstyle{definition}
\newtheorem{remark}{Remark}
\DeclareMathOperator*{\argmin}{arg\,min}
\journal{Journal of Statistical Planning and Inference}
\begin{document}

\begin{frontmatter}



\title{Bayesian nonparametric tests for multivariate locations}


\author{Indrabati Bhattacharya}

\address{Department of Biostatistics and Computational Biology\\
University of Rochester Medical Center}
\ead{indrabati_bhattacharya@urmc.rochester.edu}

\author{Subhashis Ghosal}
\ead{sghosal@ncsu.edu}

\address{Department of Statistics\\
North Carolina State University}

\begin{abstract}
In this paper, we propose novel, fully Bayesian non-parametric tests for one-sample and two-sample multivariate location problems. We model the underlying distribution using a Dirichlet process prior, and develop a testing procedure based on the posterior credible region for the spatial median functional of the distribution. For the one-sample problem, we fail to reject the null hypothesis if the credible set contains the null value. For the two-sample problem, we form a credible set for the difference of the spatial medians of the two samples and we fail to reject the null hypothesis of equality if the credible set contains zero. We derive the local asymptotic power of the tests under shrinking alternatives, and also present a simulation study to compare the finite-sample performance of our testing procedures with existing parametric and non-parametric tests.

\end{abstract}


\begin{keyword}
Bayesian nonparametrics \sep Hypothesis testing\sep credible region\sep Pitman alternatives.

\end{keyword}

\end{frontmatter}






\section{Introduction}
\label{3S1}
Several frequentist testing procedures for multivariate locations are available in the literature, both parametric and non-parametric. The most well-known parametric procedure is the Hotelling's $T^2$-test, which is based on the multivariate mean vector and the covariance matrix, and it relies on the assumption of multivariate normality. This technique performs well if the assumption of multivariate normality is nearly correct, but suffers heavily otherwise, or in the presense of outliers. Non-parametric and robust alternatives based on signs and ranks have been quite popular over the years \citep{oja2004multivariate}. 

The notions of signs and ranks are based on the \enquote{ordering} of the data points, but in the multivariate setting, there is no objective basis of ordering. The notions are generalized to higher dimensions using $\ell_1$-objective functions (see Section \ref{3S2}). The existing one-sample location problem have the following set up. Suppose that, we have $n$ independent and identically distributed (i.i.d.) observations $Y=(Y_1,\dots,Y_n)$ taking values in $\mathbb{R}^k$ from a distribution $P(\cdot-\theta)$, a $k$-variate continuous distribution centered at $\theta=(\theta_1,\dots,\theta_k)^T$. Our objective is to test the hypothesis
\begin{equation}
H_0: \theta=\theta_0\quad \text{vs.}\quad H_1:\theta \neq \theta_0. 
\end{equation}
The existing score-based non-parametric test procedures are based on the multivariate spatial sign vector $U$, mulivariate spatial rank $R$, and multivariate spatial signed rank $Q$, which are defined respectively as
\begin{align}\label{e1}
    U(y)=& \begin{cases}
    \Vert y \Vert_2^{-1}y,\quad y\neq 0,\\
    0,\qquad\qquad y=0,
    \end{cases}\\
    \label{e2}
    R(y;Y)=\ &\frac{1}{n}\sum_{i=1}^nU(y-Y_i),\\
    \label{e3}
    Q(y;Y)=&\  \frac{1}{2}[R(y;Y)+R(y;-Y)].
\end{align}
The estimator of the location associated with spatial sign vector in \eqref{e1} is the spatial median
\begin{equation}
\hat{\theta}_n=\argmin_{\theta \in \mathbb{R}^k}\mathbb{P}_n\Vert Y-\theta\Vert_2,
\end{equation}
where $\mathbb{P}_n=n^{-1}\sum_{i=1}^n\delta_{Y_i}$ is the empirical measure. The score functions \eqref{e2} and \eqref{e3} give rise to multivariate Hodges-Lehmann estimators \citep{oja2004multivariate}. One drawback of these multivariate sign and rank-based tests is that their $p$-values rely on a limiting chi-square distribution of the test statistics, provided the underlying distribution is elliptically symmetric (defined in Section \ref{3S2}). In this paper, we construct Bayesian non-parametric testing procedures for multivariate locations using the spatial median. In other words, here we focus on the score functions of type \eqref{e1} and propose a non-parametric Bayesian testing procedure. Such a procedure is more attractive because it directly provides a credible set for the spatial median through quick posterior sampling, hence a testing criterion can be formulated without depending on asymptotics. We assume that the observations are drawn from a random distribution $P$, and we put a Dirichlet process (details given in Section \ref{3S3}) prior on it. From $P$, we can infer about its spatial median functional
\begin{equation}
    \theta(P)=\argmin_{\theta \in \mathbb{R}^k}P(\Vert Y-\theta \Vert_2-\Vert Y \Vert_2),
\end{equation}
where $Pf=\int f \mathrm{d}P$. The exact posterior distribution (modulo the Monte Carlo error) of $\theta(P)$ can be obtained easily by posterior simulation. Thus, we can form a credible region for $\theta(P)$ and our decision is based on whether the null value $\theta_0$ falls into this credible set. For elliptically symmetric distributions, this testing procedure effectively studies the one-sample location problem described above, but our testing procedure can be used to study a wider range of distributions $P$, where we study the null hypothesis $H_0:\theta(P)=0$. We show that our testing procedure is asymptotically non-parametric, i.e., the limiting type I error does not depend on the true distribution, and we further compute the asymptotic power function under Pitman (contiguous) alternatives along possible directions. The two-sample test has be formulated in a similar way, and its properties have been explored in a similar fashion. 

The development of the asymptotic theory for the testing procedures relies on a strengthening of the theory developed in \cite{bhattacharya2019bayesian}, which studies the asymptotic properties of a multivariate median, denoted by $\theta(P)$, in a non-parametric Bayesian framework. Precisely, they put a Dirichlet process prior on $P$ and proved a Bernstein-von Mises theorem for $\theta(P)$ (Theorem 3.1 of \cite{bhattacharya2019bayesian}), which we use for the derivation of the theorems in Sections \ref{3S3} and \ref{3S4}.

The rest of this paper is organized as follows. In Section \ref{3S2}, we give an overview of the existing multivariate testing procedures. In Section \ref{3S3}, we describe our proposed Bayesian non-parametric test procedures. Section \ref{3S4} gives the local asymptotic power under contiguous alternatives and Section \ref{3S5} presents a simulation study. All the proofs are given in Section \ref{3S6}, and we close the paper with a brief discussion in Section 7.

\section{Overview of existing tests}\label{3S2}
We begin this section by briefly describing the existing non-parametric testing procedures for one-sample location problems, and later move on to two-sample and several samples problems. Let $Y_1,\dots,Y_n$ be $n$ i.i.d. observations from a $k$-variate probability distribution $P$. According to \cite{sirkia2007multivariate}, the non-parametric testing methods can be classified as based on a multivariate spatial sign function $U$, a multivariate spatial rank $R$, and a multivariate spatial signed rank $Q$, which are defined as follows.

The test statistic based on the score function $T(Y)$, which is a general notation for the score functions described in Equations \eqref{e1}, \eqref{e2} and \eqref{e3}, is given by $n^{-1}\sum_{i=1}^nT(Y_i)$. Under $H_0$, $n^{-1/2}\sum_{i=1}^n T(Y_i) \rightsquigarrow \mathrm{N}_k(0, \Sigma)$, where $\Sigma=P\{T(Y)T(Y)^T\}$. The usual estimator for $\Sigma$ is $\hat{\Sigma}=n^{-1}\sum_{i=1}^nT(Y_i)T(Y_i)^T$. The appropriate cut-off for constructing the test procedure depends on the assumption of elliptical symmetry of $P$ \citep{oja2004multivariate}. The underlying distribution is said to be elliptically symmetric if its density is of the form
$$
f(y-\theta)=\vert \Sigma \vert ^{-1/2}g((y-\theta)^T\Sigma^{-1}(y-\theta)),
$$
with a symmetry center $\theta$, and a positive definite scatter matrix $\Sigma$. The univariate non-negative function $g(\cdot)$ satisfies the condition $\int_0^\infty u^{k/2-1}g(u)\mathrm{d}u<\infty$, so that $f$ is a valid density \citep{gomez2003survey}. The contours of these densities form concentric ellipses around the center $\theta$. Under $H_0$, 
$$
V^2=n\Big\Vert \hat{\Sigma}^{-1/2}\frac{1}{n}\sum_{i=1}^n T(Y_i)\Big\Vert^2 \rightsquigarrow \chi_k^2,
$$
where $\rightsquigarrow$ denotes convergence in distribution, and $\chi_k^2$ denotes a chi-square distribution with $k$ degrees of freedom \citep{sirkia2007multivariate}. Note that $V^2$ is $n$ times the squared length of the average standardized score vectors. For elliptically symmetric distributions, $V^2$ is strictly distribution free \citep{oja2004multivariate}. An approximate p-value can be obtained from the above limiting chi-square distribution. For small sample sizes, a conditional distribution-free p-value can be obtained under the assumption of directional symmetry (under which $(Y-\theta)/\Vert Y-\theta\Vert_2$ has the same distribution as $(\theta-Y)/\Vert \theta-Y\Vert_2$). This p-value can be obtained as $\mathrm{E}_\delta[\mathbbm{1}\{V_\delta^2 \geq V^2\}]$, where $\mathrm{E}_\delta$ is the expectation for the uniform distribution $\delta$ over the set of $2^k$ $k$-vectors with each component being $+1$ or $-1$, and $V_\delta^2$ is the value of the test statistic for the data points $\delta_1Y_1,\dots,\delta_nY_n$ \citep{oja2004multivariate}.

The one sample testing procedure has been naturally extended to two samples. Suppose that, we have two independent random samples $Y_{1}^{(j)},\dots,Y_{n_j}^{(j)}$, from $k$-variate distributions $P(\cdot-\theta^{(j)})$, $j=1,2$. We test the hypothesis
$$
H_0: \theta^{(1)}=\theta^{(2)},\quad \text{against}\ H_1: \theta^{(1)}\neq \theta^{(2)}.
$$
\cite{sirkia2007multivariate} developed a testing procedure using the general score function $T(Y)$ based on the following \textit{inner standardization} approach. First, a $k\times k$ matrix $H$ and a $k$-vector have to be found such that, for $Z_{i}^{(j)}=H(Y_{i}^{(j)}-h),\ i=1,\dots,n_j,\ j=1,2$,
\begin{align*}
    \frac{1}{n}\sum_{j=1}^2\sum_{i=1}^{n_j}T(Z_i^{(j)})=& 0,\\
    \frac{k}{n}\sum_{j=1}^2\sum_{i=1}^{n_j}T(Z_{i}^{(j)})T(Z_{i}^{(j)})^T=& \bigg\{\frac{1}{n}\sum_{j=1}^2\sum_{i=1}^{n_j}\Vert T(Z_i^{(j)})\Vert_2^2\bigg\}I_k,
\end{align*}
where $n=n_1+n_2$, and $I_k$ denotes the identity matrix of order $k\times k$. The test statistic has the form 
\begin{equation}\label{3eq7}
V^2= k\frac{\sum_{j=1}^2 n_j\Vert \frac{1}{n_j}\sum_{i=1}^{n_j}T(Z_{i}^{(j)})\Vert_2^2}{\frac{1}{n}\sum_{j=1}^2\sum_{i=1}^{n_j}\Vert T(Z_{i}^{(j)})\Vert_2^2}.
\end{equation}
It has been shown that $V^2$ has a limiting chi-square distribution with $k$ degrees of freedom. Thus, for large samples, a p-value can be constructed using the quantiles of the chi-square distribution. For smaller samples, an approximate p-value can be obtained using a conditionally distribution-free \textit{permutation test} version \citep{sirkia2007multivariate}, similar to $V_\delta$ in the one sample problem. This approach has been extended to a general $c$ number of samples as well.
\section{Bayesian Non-parametric Tests}
\label{3S3}
\subsection{One-sample Problem}
Suppose that, we have $n$ observations $Y_1,\dots,Y_n$ taking values in $\mathbb{R}^k$ from a $k$-dimensional distribution $P$. We choose a non-parametric Bayesian approach, i.e., we impose a prior on the underlying random distribution $P$, and form a credible set based on the posterior distribution of the spatial-median functional
\begin{equation}
    \theta(P)=\argmin_{\theta \in \mathbb{R}^k}P\{\Vert Y-\theta \Vert_2-\Vert Y \Vert_2\}.
\end{equation}
The hypothesis of interest is
\begin{equation*}
H_0: \theta(P)=\theta_0\quad \text{vs.}\quad H_1:\theta(P) \neq \theta_0. 
\end{equation*}
The most commonly used prior on $P$ is a Dirichlet process prior with centering measure $\beta$ ($\mathrm{DP}(\beta)$) (see Chapter 4, \cite{ghosal2017fundamentals}). A Dirichlet process prior can be alternatively denoted as $\mathrm{DP}(MG)$, where $M=\vert \beta \vert$, and $\bar{\beta}=\beta/M$ has cumulative distribution function $G$. The notations $\mathrm{DP}(\beta)$ and $\mathrm{DP}(MG)$ will be used interchangeably in this paper. Precisely, our chosen Bayesian model is given by
\begin{equation}\label{3eq8}
Y_1,\dots,Y_n\vert P\overset{iid}{\sim} P,\quad P\sim\mathrm{DP}(MG).
\end{equation}
The process $\mathrm{DP}(\beta)$ is a conjugate prior for i.i.d. observations from $P$, and the posterior distribution of $P$ given $Y_1,\dots,Y_n$ is $\mathrm{DP}(\beta+n\mathbb{P}_n)$. The exact posterior distribution of $\theta(P)$ cannot be obtained analytically, but posterior samples can be drawn via the stick-breaking construction of a Dirichlet process (Chapter 4, \cite{ghosal2017fundamentals}). If $\xi_1,\xi_2,\dots \overset{iid}{\sim} \bar{\beta}$, and $V_1,V_2,\dots \overset{iid}{\sim} \mathrm{Be}(1,M)$ are independent random variables and $W_j=V_j\prod_{l=1}^{j-1}(1-V_l)$, then $P=\sum_{j=1}^\infty W_j\delta_{\xi_j}\sim \mathrm{DP}(M\bar\beta)$. The posterior Dirichlet process can also be written in the form $\mathrm{DP}(M\bar{\beta})$ using the updating rule
$$
M\mapsto M+n,\quad \bar{\beta}\mapsto \frac{M}{M+n}\bar{\beta}+\frac{n}{M+n}\mathbb{P}_n.
$$
The non-informative limit as $M\to 0$ of the posterior of $P$, denoted by $\mathrm{DP}(n\mathbb{P}_n)$, is called the Bayesian bootstrap distribution. Its centering measure is $\mathbb{P}_n$, and a random distribution generated from it is supported on the observation points. It has the representation $P=\sum_{i=1}^nW_i\delta_{Y_i}$, where $W_i=U_i/\sum_{j=1}^n U_j$, with $U_1,\dots,U_n\overset{iid}{\sim}\mathrm{Exp}(1)$. If we choose the non-informative limit of the posterior Dirichlet process, we do not need to generate posterior samples from the full Dirichlet process, rather we only need to sample $n$ independently and identically distributed (i.i.d.) observations from an exponential distribution with parameter 1, which saves a lot of computational cost. Thus, a posterior $100(1-\alpha)\%$ credible region can be formed by the following steps.
\begin{itemize}
    \item For $b=1,\dots,B$, draw $U_{1b},\dots,U_{nb}\stackrel{iid}{\sim} \mathrm{Exp}(1)$. Thus, we calculate the Bayesian bootstrap weights as $W_{ib}=U_{ib}/\sum_{j=1}^n U_{jb}$, $i=1,\dots n$. 
    \item Draw posterior samples $\theta_b,\ b=1,\dots,B$, using the expression
    \begin{equation*}
        \theta_b=\argmin_{\theta}\sum_{i=1}^n W_{ib}\Vert Y_{ib}-\theta \Vert_2. 
    \end{equation*}
    \item Compute the posterior mean $\bar{\theta}=B^{-1} \sum_{b=1}^B \theta_b$ and the posterior covariance matrix $S=B^{-1}\sum_{b=1}^B(\theta_b-\bar{\theta})(\theta_b-\bar{\theta})^{\prime}$.
    \item A $100(1-\alpha)\%$ credible set for $\theta(P)$ is then constructed as
    \begin{equation*}
        C(Y_1,\dots,Y_n; \alpha)=\{\theta:(\theta-\bar{\theta})^{\prime}S^{-1}(\theta-\bar{\theta})\leq r_{1-\alpha}\},
    \end{equation*}
    where $r_{1-\alpha}$ is the $100(1-\alpha)$th percentile of $(\theta_b-\bar{\theta})^{\prime}S^{-1}(\theta_b-\bar{\theta}),\ b=1,\dots, B$.
    \item We reject $H_0$ if $\theta_0 \notin C(Y_1,\dots,Y_n;\alpha)$.
\end{itemize}
 The credible set considered here can be called \textit{modulo Monte Carlo error} because it is constructed using simulated draws and hence is subject to the Monte Carlo error. However, the Monte Carlo error can be controlled and made arbitrarily small. Next, we investigate the asymptotic properties of this testing procedure. For all the theorems discussed here, we make the following assumptions on the underlying true distributions. Below, $P^\star$ is the general notation for the underlying distributions.
\begin{assumption}
The distribution $P^\star$ has a density that is bounded on bounded subsets of $\mathbb{R}^k$.
\end{assumption}
\begin{assumption}
The spatial median of $P^\star$, i.e., $\theta^\star=\theta(P^\star)$ is unique.
\end{assumption}

\begin{theorem}{\label{3thm1}}
Suppose that, under the null hypothesis, $Y\sim P_{\theta_0}$, i.e., $\theta(P_{\theta_0})=\theta_0$, and $P_{\theta_0}$ satisfies Assumptions 1-2. Then, the one-sample Bayesian non-parametric test for $H_0:\theta(P)=\theta_0$ is asymptotically nonparametric, i.e., $$
P_{\theta_0}(\theta_0 \in C(Y_1,\dots,Y_n;\alpha)) \rightarrow 1-\alpha,
$$
as $n\to\infty$.
\end{theorem}
As we have already mentioned, the testing procedure has been constructed only using the posterior samples, without relying on any asymptotic properties. The proof of Theorem \ref{3thm1} is based on convergence properties of the posterior mean $(\bar{\theta})$ and the covariance matrix $(S)$ of the spatial median $\theta(P)$, for the Bayesian model \eqref{3eq8}. Let $P^\star$ be the true distribution of $Y$, and $\theta^\star\equiv\theta(P^\star)$. Also define
\begin{align}
    U_{\theta,P}=&P\bigg( \frac{(Y-\theta)(Y-\theta)^T}{\Vert Y-\theta \Vert_2^2}\bigg)\\
    V_{\theta,P}=&P\bigg\{ \frac{1}{\Vert Y-\theta \Vert_2}\bigg(I_k -\frac{(Y-\theta)(Y-\theta)^T}{\Vert Y-\theta \Vert_2^2}\bigg)\bigg\}.
\end{align}
Under Assumptions 1-2 on $P^\star$, the posterior distribution of the spatial median $\theta(P)$ can be approximated by a Gaussian distribution in the Bernstein-von Mises sense (Theorem 3.1, \cite{bhattacharya2019bayesian}), i.e., given $Y_1,\dots,Y_n$
$$
\sqrt{n}(\theta(P)-\hat{\theta}_n) \rightsquigarrow \mathrm{N}_k(0, V_{\theta^\star,P^\star}^{-1}U_{\theta^\star,P^\star}V_{\theta^\star,P^\star}^{-1}). 
$$
In Lemma \ref{3l1} (see below), we strengthen the conclusion of Theorem 3.1 in \cite{bhattacharya2019bayesian} to establish the convergence properties of the posterior mean and covariance matrix. 
\begin{lemma}\label{3l1}
Suppose that, the true distribution of $Y_1,\dots,Y_n \in \mathbb{R}^k$ is $P^\star$, and $P^\star$ satisfies Assumptions 1--2. Then, under the Bayesian model \eqref{3eq8}, the posterior mean $\bar{\theta}$ and the covariance matrix $S$ can be written as $\bar{\theta}=\hat{\theta}_n +o_{P^\star}(n^{-1/2})$ and $nS=V_{\theta^\star,P^\star}^{-1}U_{\theta^\star,P^\star}V_{\theta^\star,P^\star}^{-1}+o_{P^\star}(1)$, respectively, where $\hat{\theta}_n$ is the sample spatial median of $Y_1,\dots,Y_n$.
\end{lemma}
\subsection{Two Sample Problem}
The Bayesian non-parametric testing procedure for two-sample location problem can be constructed generalizing the one-sample procedure. Suppose that, we have $n_1$ i.i.d. observations $Y_1^{(1)},\dots,Y_{n_1}^{(1)} $ from a $k$-variate distribution $P^{(1)}$, and $n_2$ i.i.d. observations $Y_1^{(2)},\dots,Y_{n_2} ^{(2)}$ from another $k$-variate distribution $P^{(2)}$, independent of $P^{(1)}$. We want to test the hypothesis
$$
H_0: \theta(P^{(1)})-\theta(P^{(2)})=0\quad \text{against}\quad H_1: \theta(P^{(1)})-\theta(P^{(2)}) \neq 0. 
$$
As we have previously mentioned, if $P^{(1)}=P(\cdot-\theta^{(1)})$ and $P^{(2)}=P(\cdot-\theta^{(2)})$ are elliptically symmetric distributions, then this problem boils down to studying the two-sample location problem $H_0:\theta^{(1)}-\theta^{(2)}=0$ against $H_1:\theta^{(1)}-\theta^{(2)} \neq 0$. We put a $\mathrm{DP}(MG)$ prior on both $P^{(1)}$ and $P^{(2)}$, for some $M>0$ and $G$, i.e.,
\begin{equation}\label{3eq9}
    Y_1^{(j)},\dots,Y_{n_j}^{(j)}\vert P^{(j)}\overset{iid}{\sim} P^{(j)},\ P^{(j)}\sim \mathrm{DP}(MG),\ j=1,2.
\end{equation}
Thus $P^{(1)}$ and $P^{(2)}$ have stick-breaking representations $P^{(1)}=\sum_{m=1}^\infty W_m^{(1)}\delta_{\xi_m^{(1)}}$ and $P^{(2)} = \sum_{m=1}^\infty W_m^{(2)}\delta_{\xi_m^{(2)}}$,
respectively, where $W_m^{(1)}$, $m=1,2,\dots$, and $W_m^{(2)}$, $m=1,2,\dots$, are drawn from $\mathrm{Be}(1,M)$. Also, $\xi_m^{(1)},\ m=1,2,\dots$, and $\xi_m^{(2)}$, $m=1,2,\dots$, are i.i.d. samples from $G$. The posterior distribution of $P^{(j)}$ is $\mathrm{DP}(\beta+\sum_{i=1}^{n_j}\delta_{Y_i^{(j)}})$, $j\in\{1,2\}$. Like before, we consider the Bayesian bootstrap approximations of the posteriors of $P^{(j)}$, which can be written as $P^{(j)}=\sum_{m=1}^{n_j}W^{(j)}_m\delta_{Y^{(j)}_m}$, where $W_m^{(j)}=U_m^{(j)}/\sum_{l=1}^{n_j} U_l^{(j)}$ with $U_1^{(j)},\dots U_{n_j}^{(j)}\overset{iid}{\sim}\mathrm{Exp}(1)$, $j=1,2$. We construct a $100(1-\alpha)\%$ credible set for $\theta(P^{(1)})-\theta(P^{(2)})$ by the following steps.
\begin{itemize}
    \item For $b\in \{1,\dots,B\}$ and $j\in\{1,2\}$, draw $U^{(j)}_{1b},\dots,U^{(j)}_{n_jb}\overset{iid}{\sim}\mathrm{Exp}(1)$. Calculate the Bayesian bootstrap weights as $W_{ib}^{(j)}=U^{(j)}_{ib}/\sum_{l=1}^{n_j} U^{(j)}_{lb}$, $i=1,\dots,n_j,\ j\in\{1,2\}$. 
    \item Draw posterior samples $\theta_b^{(j)}$, using the expressions
    \begin{align*}
        \theta_b^{(j)}=&\argmin_{\theta}\sum_{i=1}^{n_j} W^{(j)}_{ib}\Vert Y_{ib}^{(j)}-\theta \Vert_2,\ b\in \{1,\dots,B\},\ j \in \{1,2\}.
    \end{align*}
    \item Compute posterior means $\bar{\theta}^{(j)}=B^{-1} \sum_{b=1}^B \theta_b^{(j)}$ and posterior covariance matrices $S^{(j)}=B^{-1}\sum_{b=1}^B(\theta_b^{(j)}-\bar{\theta}^{(j)})(\theta_b^{(j)}-\bar{\theta}^{(j)})^{\prime}$, for $j\in\{1,2\}$.
    \item A $100(1-\alpha)\%$ credible set for $\theta(P^{(1)})-\theta(P^{(2)})$ is then given by
    \begin{equation}
    \begin{split}
        C(Y_1^{(1)},\dots,Y_{n_1}^{(1)},Y_1^{(2)},\dots, Y_{n_2}^{(2)};\alpha)=\{\theta_1-\theta_2:(\theta_1-\theta_2-\bar{\theta}^{(1)}+\bar{\theta}^{(2)})^{\prime}\\
        (S^{(1)}+S^{(2)})^{-1}(\theta_1-\theta_2- \bar{\theta}^{(1)}+\bar{\theta}^{(2)})\leq r_{1-\alpha}\},
    \end{split}
    \end{equation}
    where $r_{1-\alpha}$ is the $100(1-\alpha)$th percentile of $$(\theta_b^{(1)}-\theta_b^{(2)}-\bar{\theta}^{(1)}+\bar{\theta}^{(2)})^{\prime}(S^{(1)}+S^{(2)})^{-1}(\theta_b^{(1)}-\theta_b^{(2)}-\bar{\theta}^{(1)}+\bar{\theta}^{(2)}),$$ for
    $b\in\{1,\dots, B\}$.
    \item We reject $H_0$ if $0 \notin C(Y_1^{(1)},\dots,Y_{n_1}^{(1)},Y_1^{(2)},\dots,Y_{n_2}^{(2)};\alpha)$.
\end{itemize}
The next theorem shows that the above test is asymptotically non-parametric, provided the underlying true distributions satisfy Assumptions 1-2.
\begin{theorem}{\label{3thm2}}
Suppose that, under the null hypothesis, $Y^{(1)}\sim P^{(1)}_{\theta_0}$ and $Y^{(2)}\sim P^{(2)}_{\theta_0}$ independently, such that $\theta(P_{\theta_0}^{(1)})=\theta(P_{\theta_0}^{(2)})=\theta_0$ for some fixed $\theta_0\in\mathbb{R}^k$. Let $P^{(1)}_{\theta_0}$ and $P^{(2)}_{\theta_0}$ satisfy Assumptions 1-2. Then the two-sample Bayesian non-parametric test is asymptotically non-parametric, i.e., for $0 < \alpha <1$
$$
P_{\theta_0}^{(1)}P_{\theta_0}^{(2)}(0 \in C(Y_1^{(1)},\dots,Y_{n_1}^{(1)},Y_1^{(2)},\dots,Y_{n_2}^{(2)};\alpha)) \rightarrow 1-\alpha,
$$
for $n_1$, $n_2$ such that $n_1\to\infty$, $n_2\to\infty$, $n_1/(n_1+n_2)\to\lambda$, and $n_2/(n_1+n_2)\to 1-\lambda$, for some fixed $0<\lambda<1$.
\end{theorem}
To prove Theorem \ref{3thm2}, we first need to investigate the asymptotic properties of the posterior distribution of $\theta(P^{(1)})-\theta(P^{(2)})$. The next lemma gives a Bernstein-von Mises theorem for the difference of spatial medians for two independent samples $Y_1^{(1)},\dots, Y_{n_1}^{(1)} \sim P^{(1)}$, and $Y_1^{(2)},\dots, Y_{n_2}^{(2)} \sim P^{(2)}$ under the Bayesian model \eqref{3eq9}. The asymptotic result follows almost immediately from Theorem 3.1 in \cite{bhattacharya2019bayesian}. Before stating Lemma \ref{3l2}, we introduce a few more notations. Suppose, ${P^\star}^{(j)}$ denotes the true distribution of $Y^{(j)}$, and, ${\theta^\star}^{(j)}=\theta({P^\star}^{(j)})$ is the spatial median of ${P^\star}^{(j)}$, $j=1,2$. Next, let $\hat{\theta}_{n_j}^{(j)}$ denote the sample spatial median constructed from $Y_1^{(j)},\dots,Y_{n_j}^{(j)}$, $j=1,2$. 
\begin{lemma}\label{3l2}
Let ${P^\star}^{(j)}$, $j=1,2$, satisfies Assumptions 1--2. Then under the Bayesian model \eqref{3eq9}, and $n_1$, $n_2$ such that $n_1\to\infty$, $n_2\to\infty$, $n_1/(n_1+n_2)\to\lambda$, and $n_2/(n_1+n_2)\to 1-\lambda$ for some fixed $0<\lambda<1$,
\begin{enumerate}[label={\upshape(\roman*)}]
\item $
\begin{aligned}[t]
\sqrt{n}(\hat{\theta}^{(1)}_{n_1}-{\theta^\star}^{(1)}-\hat{\theta}^{(2)}_{n_2}+{\theta^\star}^{(2)}) \rightsquigarrow &\mathrm{N}_k(0,\lambda^{-1}V^{-1}_{{\theta^\star}^{(1)},{P^\star}^{(1)}}U_{{\theta^\star}^{(1)},{P^\star}^{(1)}}V^{-1}_{{\theta^\star}^{(1)},{P^\star}^{(1)}}\\&+(1-\lambda)^{-1}V^{-1}_{{\theta^\star}^{(2)},{P^\star}^{(2)}}U_{{\theta^\star}^{(2)},{P^\star}^{(2)}}V^{-1}_{{\theta^\star}^{(2)},{P^\star}^{(2)}})
\end{aligned}
$,
\item given $Y_1^{(j)},\dots,Y_{n_j}^{(j)}$, $j=1,2$, 
\begin{align*}
\sqrt{n}(\theta(P^{(1)})-\hat{\theta}^{(1)}_{n_1}-\theta(P^{(2)})+\hat{\theta}^{(2)}_{n_2}) \rightsquigarrow &\mathrm{N}_k(0,\lambda^{-1}V^{-1}_{{\theta^\star}^{(1)},{P^\star}^{(1)}}\linebreak U_{{\theta^\star}^{(1)},{P^\star}^{(1)}}V^{-1}_{{\theta^\star}^{(1)},{P^\star}^{(1)}}\\&+
(1-\lambda)^{-1}V^{-1}_{{\theta^\star}^{(2)},{P^\star}^{(2)}}U_{{\theta^\star}^{(2)},{P^\star}^{(2)}}V^{-1}_{{\theta^\star}^{(2)},{P^\star}^{(2)}})
\end{align*}
in ${P^\star}^{(1)}\times {P^\star}^{(2)}$-probability.
\end{enumerate}
\end{lemma}
\section{Asymptotic power Under Contiguous Alternatives}
\label{3S4}
In this section, we analyze the local asymptotic power of the proposed Bayesian non-parametric tests, i.e., the limiting power under a sequence of alternatives converging to the null value. For the one-sample problem, we consider differentiable in quadratic mean (DQM) densities $\mathcal{P}=\{p_\theta=\mathrm{d}P_\theta/\mathrm{d}\mu: \theta \in \mathbb{R}^k\}$, i.e., there exists a vector valued measurable function $\dot{\ell}_\theta: \mathbb{R}^k\rightarrow \mathbb{R}^k$ such that, for $h\rightarrow 0$,
$$
\int \Big (\sqrt{p_{\theta+h}}-\sqrt{p_\theta}-\frac{1}{2}h^T\dot{\ell}_\theta\sqrt{p_\theta}\Big)^2\mathrm{d}\mu=o(\Vert h \Vert_2^2).
$$
We consider shrinking alternatives of the form 
\begin{equation}\label{3eq11}
H_{1n}: \theta=\theta_0+ \frac{h}{\sqrt{n}},
\end{equation}
which are also called Pitman alternatives, for models $P_\theta\in\mathcal{P}$. Here, we study the limiting power for the sequence of distributions $P_{\theta_0+h/\sqrt{n}} \in \mathcal{P}$. As a consequence of the DQM condition, the models $P_{\theta_0+h/\sqrt{n}}^n$ satisfy the local asymptotic normality (LAN) condition, i.e., there exist a matrix $I_\theta$ and a random vector $\Delta_{n,\theta}\rightsquigarrow \mathrm{N}_k(0,I_\theta)$ such that, for every converging sequence $h_n \rightarrow h$,
\begin{equation}\label{eq312}
\log\frac{\mathrm{d}P_{\theta+h_n/\sqrt{n}}^n}{\mathrm{d}P_{\theta}^n} =h^T \Delta_{n,\theta}-\frac{1}{2}h^T I_\theta h+o_{P_{\theta}^n}(1).
\end{equation}
In this context, specifically $\Delta_{n,\theta}=n^{-1/2}\sum_{i=1}^nh^T \dot{\ell}_\theta(Y_i)$, and $I_\theta= P_\theta \dot{\ell}_\theta \dot{\ell}_\theta^T$. The next theorem gives the limiting power for the one-sample test under a sequence of alternatives of the form $H_{1n}$ for the DQM models. 
\begin{theorem}\label{3thm3}
Suppose that, $P_{\theta_0}$ satisfies Assumptions 1--2. As $n\to\infty$, for a sequence of shrinking alternatives of the form \eqref{3eq11}, i.e., under a sequence of DQM models $P_{\theta_0+h/\sqrt{n}} \in \mathcal{P}$, the limiting power of the one-sample Bayesian non-parametric test for $H_0:\theta(P)=\theta_0$ is given by $F_{\chi^2}(\chi^2_{k;\alpha};k,\delta_1^\prime(V_{\theta_0,P_{\theta_0}}^{-1} U_{\theta_0,P_{\theta_0}} V_{\theta_0,P_{\theta_0}}^{-1})^{-1}\delta_1)$, where 
\begin{equation}
    \delta_1=P_{\theta_0}\bigg(-V_{\theta_0,P_{\theta_0}}^{-1} \frac{Y-\theta_0}{\Vert Y-\theta_0 \Vert_2}\{\dot{\ell}_{\theta_0}(Y)\}^T h\bigg),
\end{equation}
and $F_{\chi^2}(x;k,\delta)$ is the distribution function of a non-central chi-square distribution with $k$ degrees of freedom and non-centrality parameter $\delta$, with $\chi^2_{k;\alpha}$ being the $100(1-\alpha)$th percentile of the $\chi^2_k$ distribution.
\end{theorem}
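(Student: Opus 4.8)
The plan is to obtain the limiting law of the quadratic form $(\theta_0-\bar\theta)^\prime S^{-1}(\theta_0-\bar\theta)$ under the contiguous alternatives $P_{\theta_0+h/\sqrt n}$ and read off the corresponding tail probability, since the test accepts $H_0$ exactly when this form is $\le r_{1-\alpha}$. The strategy rests on three ingredients: (a) a Bernstein--von Mises statement for the Dirichlet process posterior of the spatial-median functional $\theta(P)$, which identifies $\bar\theta$ and $S$ with a frequentist $M$-estimator and its sandwich variance; (b) the asymptotic linear expansion of the spatial median; and (c) Le Cam's third lemma together with the LAN expansion \eqref{eq312} to compute the shift produced by the alternative.

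First I would establish (a). Since $\theta(P)=\argmin_\theta P(\Vert Y-\theta\Vert_2-\Vert Y\Vert_2)$ is a smooth functional of $P$, with influence function at $P_{\theta_0}$ equal to $V_{\theta_0,P_{\theta_0}}^{-1}(Y-\theta_0)/\Vert Y-\theta_0\Vert_2$, the functional delta method for the Dirichlet-process/Bayesian-bootstrap limit shows that, conditionally on the data, $\sqrt n(\theta(P)-\hat\theta_n)\rightsquigarrow \mathrm{N}(0,\,V_{\theta_0,P_{\theta_0}}^{-1}U_{\theta_0,P_{\theta_0}}V_{\theta_0,P_{\theta_0}}^{-1})$ in $P_{\theta_0}^n$-probability, where $\hat\theta_n$ is the sample spatial median. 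Consequently $\sqrt n(\bar\theta-\hat\theta_n)=o_{P_{\theta_0}^n}(1)$, $nS\to V_{\theta_0,P_{\theta_0}}^{-1}U_{\theta_0,P_{\theta_0}}V_{\theta_0,P_{\theta_0}}^{-1}$ in probability, and the empirical quantile $r_{1-\alpha}\to\chi^2_{k;\alpha}$; these are precisely the facts underlying Theorem \ref{3thm1}. Writing $Z_n=\sqrt n(\theta_0-\bar\theta)$ and combining with the standard linearization $\sqrt n(\hat\theta_n-\theta_0)=V_{\theta_0,P_{\theta_0}}^{-1}\,n^{-1/2}\sum_{i=1}^n (Y_i-\theta_0)/\Vert Y_i-\theta_0\Vert_2+o_{P_{\theta_0}^n}(1)$ reduces the acceptance event to $Z_n^\prime (V_{\theta_0,P_{\theta_0}}^{-1}U_{\theta_0,P_{\theta_0}}V_{\theta_0,P_{\theta_0}}^{-1})^{-1}Z_n\le\chi^2_{k;\alpha}+o_P(1)$.

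Next comes (c), the heart of the argument. Under $P_{\theta_0}^n$ the pair $(Z_n,\,\log(\mathrm{d}P_{\theta_0+h/\sqrt n}^n/\mathrm{d}P_{\theta_0}^n))$ is jointly asymptotically Gaussian, because $Z_n$ is asymptotically linear and the log-likelihood ratio is given by \eqref{eq312}. Le Cam's third lemma then yields, under the alternative $P_{\theta_0+h/\sqrt n}^n$, that $Z_n\rightsquigarrow \mathrm{N}(\delta_1,\,V_{\theta_0,P_{\theta_0}}^{-1}U_{\theta_0,P_{\theta_0}}V_{\theta_0,P_{\theta_0}}^{-1})$, where the mean shift is the limiting covariance between the influence function of the spatial median and the score direction in \eqref{eq312}; carrying out this expectation produces exactly the stated $\delta_1$. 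Because $P_{\theta_0+h/\sqrt n}^n$ and $P_{\theta_0}^n$ are mutually contiguous, the limits $nS\to V_{\theta_0,P_{\theta_0}}^{-1}U_{\theta_0,P_{\theta_0}}V_{\theta_0,P_{\theta_0}}^{-1}$ and $r_{1-\alpha}\to\chi^2_{k;\alpha}$, as well as the linearization remainders, persist under the alternative. The continuous mapping theorem applied to the quadratic form then gives the non-central $\chi^2_k$ limit with non-centrality $\delta_1^\prime(V_{\theta_0,P_{\theta_0}}^{-1}U_{\theta_0,P_{\theta_0}}V_{\theta_0,P_{\theta_0}}^{-1})^{-1}\delta_1$, and evaluating the relevant probability at $\chi^2_{k;\alpha}$ yields the claimed power.

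I expect the main obstacle to be rigorously establishing the Bernstein--von Mises statement in (a) for the \emph{argmin} functional under the Dirichlet process prior: one must verify uniqueness and sufficient smoothness of the spatial median, check Hadamard differentiability of $P\mapsto\theta(P)$ along the directions generated by the posterior, and confirm that the truncated stick-breaking approximation faithfully recovers the posterior mean and covariance. Once the posterior is shown to behave like $\mathrm{N}(\hat\theta_n,\,n^{-1}V_{\theta_0,P_{\theta_0}}^{-1}U_{\theta_0,P_{\theta_0}}V_{\theta_0,P_{\theta_0}}^{-1})$ and its stability under contiguity is secured, the covariance computation yielding $\delta_1$ and the passage to the non-central chi-square are routine.
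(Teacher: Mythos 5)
Your route is the same as the paper's: linearize $\sqrt{n}(\hat{\theta}_n-\theta_0)$ by $M$-estimation theory (the paper invokes Theorem 5.23 of \cite{van2000asymptotic}), note joint asymptotic normality of this statistic with the LAN log-likelihood ratio, apply Le Cam's third lemma to get the mean shift $\delta_1$ under $P_{\theta_0+h/\sqrt{n}}^n$, use contiguity to carry the $o_P(1)$ remainders, the limit of $nS$, and $r_{1-\alpha}\to\chi^2_{k;\alpha}$ over to the alternative, and finish with the non-central chi-square quadratic form. (Your sign convention in the linearization is flipped relative to the paper's $\sqrt{n}(\hat{\theta}_n-\theta_0)=-n^{-1/2}\sum_i V_{\theta_0,P_{\theta_0}}^{-1}(Y_i-\theta_0)/\Vert Y_i-\theta_0\Vert_2+o_{P_{\theta_0}}(1)$, but since $\delta_1$ enters the non-centrality parameter quadratically this is immaterial.)

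One step is a genuine gap as written: you deduce $\sqrt{n}(\bar{\theta}-\hat{\theta}_n)=o_P(1)$ and $nS\to V_{\theta_0,P_{\theta_0}}^{-1}U_{\theta_0,P_{\theta_0}}V_{\theta_0,P_{\theta_0}}^{-1}$ ``consequently'' from the conditional Bernstein--von Mises theorem. Weak convergence of the posterior does not imply convergence of its first two moments; one needs uniform integrability of $\Vert\sqrt{n}(\theta(P)-\hat{\theta}_n)\Vert_2^2$ under the posterior. The paper devotes Lemma \ref{3l1} to exactly this point: it identifies the posterior asymptotically with an exchangeably weighted (Bayesian) bootstrap $Z$-estimator and invokes the bootstrap \emph{moment} consistency results of \cite{cheng2015moment}, verified through a quadratic-drop condition $P^\star(m_\theta-m_{\theta^\star})\lesssim-\Vert\theta-\theta^\star\Vert_2^2$, the global Lipschitz bound $\vert m_\theta(y)-m_{\theta^\star}(y)\vert\le\Vert\theta-\theta^\star\Vert_2$, and a finite bracketing-entropy integral, after first restricting the bootstrapped median to a compact set $\{\Vert\theta\Vert_2\le 3K\}$ with high probability. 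You correctly flag the BvM statement as requiring care, but that distributional limit is already available from \cite{bhattacharya2019bayesian}; the missing ingredient in your plan is the moment control that licenses replacing $(\bar{\theta},S)$ by $(\hat{\theta}_n,\,n^{-1}V^{-1}UV^{-1})$, without which the reduction of the acceptance event in your second paragraph does not go through.
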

For the two sample problem, we again consider DQM models $P^{(1)}_{\theta_0+h_1/\sqrt{n_1}}$, $P^{(2)}_{\theta_0+h_2/\sqrt{n_2}} \in \mathcal{P}$, i.e., the contiguous alternatives are of the form
\begin{equation}\label{eq316}
    H_{1n}:\theta_{n_j}^{(j)}=\theta_0+ \frac{h_j}{\sqrt{n_j}},\ j=1,2.
\end{equation}
The following theorem gives the limiting power of the two-sample test under contiguous alternatives of the form \eqref{eq316}, and the notations from Theorem \ref{3thm3} directly translate to the next theorem.
\begin{theorem}\label{3thm4}
Suppose that, $P^{(1)}_{\theta_0}$ and $P^{(2)}_{\theta_0}$ are mutually independent, and satisfy Assumptions 1--2. For a sequence of shrinking alternatives of the form \eqref{eq316}, i.e., for a sequence of DQM models $P^{(1)}_{\theta_0+h_1/\sqrt{n_1}},\ P^{(2)}_{\theta_0+h_2/\sqrt{n_2}}\in \mathcal{P}$, the asymptotic power of the two-sample Bayesian non-parametric test for testing $H_0: \theta(P^{(1)})=\theta(P^{(2)})=\theta_0$ for any $\theta_0 \in \mathbb{R}^k$, is given by 
$$
F_{\chi^2}(\chi^2_{k;\alpha}; \delta_2^\prime  (V_{\theta_0;P_{\theta_0}^{(1)}}^{-1}U_{\theta_0;P_{\theta_0}^{(1)}}V_{\theta_0,P_{\theta_0}^{(1)}}^{-1}+V_{\theta_0;P_{\theta_0}^{(2)}}^{-1}U_{\theta_0;P_{\theta_0}^{(2)}}V_{\theta_0,P_{\theta_0}^{(2)}}^{-1})^{-1}\delta_2),
$$where 
\begin{equation}
\begin{split}
    \delta_2=&\frac{1}{\sqrt{\lambda}}P_{\theta_0}^{(1)}\Big(- V_{\theta_0,P_{\theta_0}^{(1)}}^{-1} \frac{ Y^{(1)}-\theta_0 }{\Vert Y^{(1)}-\theta_0 \Vert_2} \{\dot{\ell_{\theta_0}^{(1)}}(Y^{(1)})\}^{T} h_1\Big) +\\
    &\frac{1}{\sqrt{1-\lambda}}P_{\theta_0}^{(2)}\Big(-V_{{\theta_0},P_{\theta_0}^{(2)}}^{-1} \frac{ Y^{(2)}-\theta_0 }{\Vert Y^{(2)}-\theta_0 \Vert_2}\{\dot{\ell_{\theta_0}^{(2)}}(Y^{(2)})\}^{T}h_2\Big),
    \end{split}
\end{equation}
for $n_1$, $n_2$ such that $n_1\to\infty$, $n_2\to\infty$, $n_1/(n_1+n_2)\to\lambda$, and $n_2/(n_1+n_2)\to 1-\lambda$, for some fixed $0<\lambda<1$.
\end{theorem}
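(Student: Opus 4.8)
\textbf{Proof proposal for Theorem~\ref{3thm4}.}
The plan is to deduce Theorem~\ref{3thm4} from two independent copies of the one-sample analysis that underlies Theorem~\ref{3thm3}, and then to combine them. Write $\Sigma_l := V_{\theta_0,P_{\theta_0}^{(l)}}^{-1}U_{\theta_0,P_{\theta_0}^{(l)}}V_{\theta_0,P_{\theta_0}^{(l)}}^{-1}$ for $l=1,2$, and recall $n=n_1+n_2$ with $n_1/n\to\lambda$ and $n_2/n\to 1-\lambda$. The rejection event $\{0\notin C\}$ is exactly $\{Q_n>r_{1-\alpha}\}$, where $Q_n:=(\bar\theta^{(1)}-\bar\theta^{(2)})^{\prime}(S^{(1)}+S^{(2)})^{-1}(\bar\theta^{(1)}-\bar\theta^{(2)})$, so the whole task reduces to identifying the limiting law of $Q_n$ under the product alternative $P_{\theta_0+h_1/\sqrt{n_1}}^{(1)}\times P_{\theta_0+h_2/\sqrt{n_2}}^{(2)}$ together with the limit of the data-driven cutoff $r_{1-\alpha}$.

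First I would record, separately for each sample $l$, the two ingredients already produced in the proof of Theorem~\ref{3thm3}. The posterior (Bernstein--von Mises) ingredient: conditionally on the data, $\sqrt{n_l}\,(\theta(P^{(l)})-\bar\theta^{(l)})\rightsquigarrow \mathrm{N}_k(0,\Sigma_l)$ in $P^{(l)}$-probability, which gives $n_l S^{(l)}\to\Sigma_l$ and forces the empirical Mahalanobis quantile to satisfy $r_{1-\alpha}\to\chi^2_{k;\alpha}$. This is obtained by representing the spatial median as the $\argmin$ of the convex map $\theta\mapsto P\|Y-\theta\|_2$, checking Hadamard differentiability of the median functional with influence function $y\mapsto V_{\theta_0,P_{\theta_0}^{(l)}}^{-1}(y-\theta_0)/\|y-\theta_0\|_2$, and pushing the conditional weak convergence of the Dirichlet-process posterior process through the functional delta method. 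The frequentist ingredient: under the contiguous alternative, the local asymptotic normality in \eqref{eq312} together with Le Cam's third lemma yield $\sqrt{n_l}\,(\bar\theta^{(l)}-\theta_0)\rightsquigarrow \mathrm{N}_k(\delta^{(l)},\Sigma_l)$, where $\delta^{(l)}=P_{\theta_0}^{(l)}\big(-V_{\theta_0,P_{\theta_0}^{(l)}}^{-1}\tfrac{Y^{(l)}-\theta_0}{\|Y^{(l)}-\theta_0\|_2}\,\dot\ell_{\theta_0}^{(l)\prime}\,{I_{\theta_0}^{(l)}}^{-1}h_l\big)$ is precisely the one-sample mean shift computed in Theorem~\ref{3thm3}, so that $\delta_2$ of the statement is $\lambda^{-1/2}\delta^{(1)}+(1-\lambda)^{-1/2}\delta^{(2)}$.

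Next I would combine the two samples via independence. Because the two data sets and the two Dirichlet-process priors are independent, the joint posterior of $(\theta(P^{(1)}),\theta(P^{(2)}))$ factorizes, the posterior of the difference is asymptotically $\mathrm{N}_k(\bar\theta^{(1)}-\bar\theta^{(2)},\,S^{(1)}+S^{(2)})$, and $r_{1-\alpha}\to\chi^2_{k;\alpha}$. On the sampling side the two limits of the previous paragraph are independent, so writing
\[
\sqrt{n}\,(\bar\theta^{(1)}-\bar\theta^{(2)})=\sqrt{\tfrac{n}{n_1}}\,\sqrt{n_1}\,(\bar\theta^{(1)}-\theta_0)-\sqrt{\tfrac{n}{n_2}}\,\sqrt{n_2}\,(\bar\theta^{(2)}-\theta_0)
\]
and invoking $n_lS^{(l)}\to\Sigma_l$, a direct bookkeeping with $n_1/n\to\lambda$ and $n_2/n\to1-\lambda$ shows that $\sqrt{n}\,(\bar\theta^{(1)}-\bar\theta^{(2)})$ converges to a Gaussian vector with mean $\pm\delta_2$ (the sign is immaterial below) and covariance $\Gamma:=\lim_n n\,(S^{(1)}+S^{(2)})$, the matrix entering (inverted) the displayed non-centrality, while simultaneously $n\,(S^{(1)}+S^{(2)})\to\Gamma$. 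Hence $Q_n\rightsquigarrow \chi^2_k$ with non-centrality $\delta_2^{\prime}\Gamma^{-1}\delta_2$, and since $r_{1-\alpha}\to\chi^2_{k;\alpha}$, the limiting power $\prob(Q_n>r_{1-\alpha})$ equals $F_{\chi^2}(\chi^2_{k;\alpha};k,\delta_2^{\prime}\Gamma^{-1}\delta_2)$, as asserted.

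The main obstacle is the posterior Bernstein--von Mises step, and it is inherited from the one-sample problem: the spatial-median functional is defined through the non-smooth $\ell_1$ criterion $\theta\mapsto P\|Y-\theta\|_2$, so one must control the $\argmin$ of a random convex process under the Dirichlet-process posterior and verify that the induced posterior law is asymptotically Gaussian with the sandwich covariance $\Sigma_l$; convexity of the criterion (argmax/epi-convergence arguments) is what makes this tractable despite the lack of a classical derivative at the origin. The genuinely two-sample point is comparatively mild: one must check that the two posteriors remain conditionally asymptotically independent and that the product alternative contributes additively to the LAN expansion, both of which follow from the independence of the samples and of the priors, so that $Q_n$ behaves as the quadratic form analyzed above.
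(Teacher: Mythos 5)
Your route is essentially the paper's: use the Bernstein--von Mises/moment ingredients to replace $(\bar{\theta}^{(l)},S^{(l)},r_{1-\alpha})$ by $(\hat{\theta}_{n_l}^{(l)},n_l^{-1}\Sigma_l,\chi^2_{k;\alpha})$, then LAN, contiguity and Le Cam's third lemma for the mean shift; the only packaging difference is that you apply the third lemma marginally in each sample and combine by independence, whereas the paper linearizes the joint statistic $\sqrt{n}(\hat{\theta}_{n_1}^{(1)}-\hat{\theta}_{n_2}^{(2)})$ via Theorem 5.23 of van der Vaart, pairs it with the product log-likelihood ratio $L_N^\prime$, and applies the third lemma once. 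However, your combination step contains a genuine error. By your own decomposition the limiting mean of $\sqrt{n}(\bar{\theta}^{(1)}-\bar{\theta}^{(2)})$ is $\lambda^{-1/2}\delta^{(1)}-(1-\lambda)^{-1/2}\delta^{(2)}$, and this is \emph{not} ``$\pm\delta_2$'' with $\delta_2=\lambda^{-1/2}\delta^{(1)}+(1-\lambda)^{-1/2}\delta^{(2)}$: only the overall sign of the limit vector is immaterial in the quadratic form $\delta^\prime\Gamma^{-1}\delta$, while the \emph{relative} sign between the two sample contributions changes the non-centrality parameter unless the cross term happens to vanish. The second sample enters $\hat{\theta}^{(1)}-\hat{\theta}^{(2)}$ with a minus sign, so its contribution to the shift is $(1-\lambda)^{-1/2}P_{\theta_0}^{(2)}\{+V_{\theta_0,P_{\theta_0}^{(2)}}^{-1}\frac{Y^{(2)}-\theta_0}{\Vert Y^{(2)}-\theta_0\Vert_2}\dot{\ell_{\theta_0}^{(2)}}^{T}{I_{\theta_0}^{(2)}}^{-1}h_2\}$, which coincides with the theorem's displayed $\delta_2$ only after the substitution $h_2\mapsto -h_2$; this bookkeeping is exactly what the paper's joint covariance computation with $L_N^\prime$ is meant to settle, and it cannot be dismissed as an immaterial sign. (On the other hand, your $\Gamma=\lambda^{-1}\Sigma_1+(1-\lambda)^{-1}\Sigma_2$ correctly carries the $\lambda$-weights that appear in the paper's proof.)

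The second gap is in the posterior ingredient. You assert that the conditional Bernstein--von Mises theorem ``gives $n_lS^{(l)}\to\Sigma_l$'' and ``forces'' $r_{1-\alpha}\to\chi^2_{k;\alpha}$. Conditional weak convergence of the posterior of $\sqrt{n_l}(\theta(P^{(l)})-\hat{\theta}_{n_l}^{(l)})$ does not by itself yield convergence of the posterior mean and covariance, nor the centering identity $\bar{\theta}^{(l)}=\hat{\theta}_{n_l}^{(l)}+o_{P^\star}(n_l^{-1/2})$: passing from weak convergence to first and second moments requires uniform integrability. This is precisely the content of Lemma \ref{3l1}, which the paper proves separately by casting $\theta(P)$ as an exchangeably weighted bootstrap $Z$-estimator and invoking the moment-consistency results of Cheng (2015); the needed inputs are the curvature bound $P^\star(m_\theta-m_{\theta^\star})\lesssim-\Vert\theta-\theta^\star\Vert_2^2$, the global Lipschitz property of the $\ell_1$ criterion, a bracketing-entropy bound, and localization of $\theta(P)$ to a compact set. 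Your Hadamard-differentiability/delta-method sketch addresses the distributional statement but not the moment statement, so as written the replacement of $S^{(1)}+S^{(2)}$ and of the empirical quantile $r_{1-\alpha}$ inside $Q_n$ is unsupported; you need a Lemma-\ref{3l1}-type moment consistency result, applied to each sample, to close the argument.
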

\section{Simulation Study}
\label{3S5}
We conduct a simulation study to demonstrate the finite sample performance of the proposed one-sample and two-sample Bayesian non-parametric tests, for $k=2$ and $k=10$. The choices $k=2$ and $k=10$ will help us visualize the power of the tests in relatively lower and higher dimensional scenarios. We compare our tests with the Hotelling's $T^2$-test, and the spatial sign and rank tests, discussed in Section \ref{3S2}. Recall that, for testing $H_0:\theta=0$, we can denote the spatial sign and rank statistics by the general notation 
$$
V^2=n\Big\Vert \hat{\Sigma}^{-1/2}\frac{1}{n}\sum_{i=1}^n T(Y_i)\Big\Vert^2,
$$
with $T(y)=U(y)=y/\Vert y \Vert_2$ for the sign test, and $T(y)=R(y)=n^{-1}\sum_{i=1}^n U(y-Y_i)$, for the rank test. For the two-sample test, the sign and rank statistics are given by \eqref{3eq7}. The underlying distributions are $k$-variate Gaussian, $k$-variate $t$ with 1 degree of freedom (both elliptically symmetric), and $k$-variate gamma (asymmetric), for $k=2$ and $k=10$. For $k=2$, the covariance and the scale matrices for the normal and the $t$ distributions respectively, have been chosen to be the $k\times k$ identity matrix, denoted by $\mathbf{I}_k$. Since correlation structure plays an important role in higher dimensions, for $k=10$, we choose the covariance and scale matrix to be $\Sigma$ such that $\Sigma_{ij}=1$ for $i=j$, and 0.7 otherwise, for $i,j\in\{1,\dots,k\}$.

The $k$-variate gamma distribution is constructed using Gaussian copula \citep{xue2000multivariate}. To describe the construction briefly, let $Y_1,\dots,Y_k$ be $k$ many univariate gamma random variables $\mathrm{Ga}(s,r)$ with distribution functions and density functions being denoted by $F_j$ and $f_j$, $j=1,\dots,k$. Then the joint density of $Y=(Y_1,\dots,Y_k)$ is given by
$$
g(y,s,r,V)=c_\phi\{F_1,\dots,F_k \vert V \}\prod_{j=1}^k f_j(y_j,s,r),
$$
where $c_\phi(\cdot\vert V)$ denotes the density of the $k$-dimensional Gaussian copula. For $k=2$, we choose $s=(3,3)^T$, $r=(1,1)^T$, and $V$ such that $V_{11}=V_{22}=1$, and $V_{12}=0.5$. For $k=10$, we choose $s=r=\mathbf{1}_{10}$ and $V$ such that $V_{11}=V_{22}=1$, and $V_{12}=0.7$, where $\mathbf{c}_k$ denotes the vector of all $c$'s of length $k$. The usual Hotelling's $T^2$ statistic is based on the assumption of Gaussianity. Hence for comparison, here we choose a general version of Hotelling's $T^2$-test, where the Gaussian assumption can be relaxed to existence of second moments \citep{ito1956asymptotic}. For the general Hotelling's $T^2$-test, the p-value is based on a chi-square approximation instead of the usual $F$-distribution. For the one-sample test, we consider a sample size of $n=100$. For the two-sample test, we choose $n_1=100$, $n_2=90$ for $k=2$, and $n_1=100$, $n_2=60$ for $k=10$, to evaluate the performance of the tests where the data sizes are relatively unbalanced. The credible sets are constructed using 5000 posterior draws, and the power is calculated as the proportion of times the null hypotheses are rejected off 2000 replications. The location parameters are chosen suitably to show a good range of powers. 

Tables \ref{3tab1} and \ref{3tab2} show the power values for $k=2$, and it can be noted that our test procedures attain the nominal level $0.05$, and outperforms all other procedures in most scenarios. When the underlying distributions are not Gaussian, our method performs better than other methods, especially compared with the Hotelling's $T^2$ -test. Note that for the bivariate gamma distribution, the powers for the Hotelling's $T^2$-test in Table \ref{3tab1} are larger, which may lead us to believe that it performs better compared to the other procedures. However, table \ref{3tab1} shows that the corresponding sizes are also large.

Tables \ref{3tab3} and \ref{3tab4} demonstrate the power for $k=10$, and the Bayesian nonparametric tests perform relatively well here as well. However, the computation of the $10$-dimensional spatial median for each posterior distribution is somewhat expensive. Therefore, construction of the credible region takes a significantly longer time for the 10-dimensional scenario. Since the posterior contraction rate for $\theta(P)$ remains $n^{-1/2}$ for any finite dimension, the testing procedure does not suffer from the curse of dimensionality.
\begin{table}[t]
\begin{center}
    \begin{tabular}{c|cccc}
        \hline
        $\theta$ & NPBayes & Sign Test & Rank Test & Hotelling's $T^2$ \\
        \hline
        & \multicolumn{4}{c}{Bivariate Gaussian Distribution}\\
        \hline
        (0,\ 0) & 0.050 & 0.046 & 0.051 & 0.055\\
        (0.05,\ 0.05) & 0.139 & 0.086 & 0.084 & 0.099\\
        (0.1,\ 0.05) & 0.169 & 0.125 & 0.141 & 0.156\\
        (0.1,\ -0.1) & 0.221 & 0.188 & 0.213 & 0.234 \\
        \hline
        & \multicolumn{4}{c}{Bivariate $t_1$ Distribution}\\
        \hline
        (0,\ 0) & 0.054 & 0.053 & 0.041 & 0.020 \\
        (0.05,\ 0.05) & 0.174 & 0.058 & 0.053 & 0.025\\
        (0.1,\ 0.05) & 0.179 & 0.094 & 0.082 & 0.018 \\
        (0.1,\ -0.1) & 0.201 & 0.171 & 0.197 & 0.026\\
         \hline
        & \multicolumn{4}{c}{Bivariate Gamma Distribution}\\
        \hline
        (0,\ 0) & 0.049 & 0.016 & 0.025 & 0.294\\
        (0.05,\ 0.05) & 0.027 & 0.021 & 0.039 & 0.528\\
        (0.1,\ 0.05) & 0.029 & 0.013 & 0.058 & 0.607\\
        (0.1,\ -0.1) & 0.034 &  0.009 & 0.018 & 0.255 \\
        \hline
    \end{tabular}
    \end{center}
    \caption{Power for testing $H_0: \theta(P)=0$ for bivariate Gaussian, bivariate $t$ (with 1 degree of freedom), and bivariate gamma distributions with different location parameters ($\theta$), using the nonparametric Bayes (NPBayes) test, spatial sign test, spatial rank test and the Hotelling's $T^2$-test.}
    \label{3tab1}
\end{table}
\begin{table}[t]
\begin{center}
    \begin{tabular}{c|c|cccc}
        \hline
      $\theta^{(1)}$  & $\theta^{(2)}$ & NPBayes & Sign Test & Rank Test & Hotelling's $T^2$ \\
        \hline
        & &  \multicolumn{4}{c}{Bivariate Gaussian Distribution}\\
        \hline
        (0,\ 0) & (0,\ 0) & 0.050 & 0.057 & 0.051 & 0.037\\
        (0,\ 0) & (0.1,\ 0) & 0.135 & 0.091 & 0.085 & 0.083\\
        (0,\ 0) & (0.1,\ 0.1) & 0.225 & 0.098 & 0.122 & 0.136\\
        (0,\ 0) & (0, 0.3) & 0.402 & 0.337 & 0.346 & 0.146 \\
        \hline
        & & \multicolumn{4}{c}{Bivariate $t_1$ Distribution}\\
        \hline
        (0,\ 0) & (0,\ 0) & 0.059 & 0.041 & 0.052 & 0.011 \\
        (0,\ 0) & (0,1.\ 0) & 0.141 & 0.060 & 0.074 & 0.026\\
        (0,\ 0) & (0.1,\ 0.1) & 0.158 & 0.087 & 0.099 & 0.022 \\ (0,\ 0) &
        (0,\ 0.3) & 0.307 & 0.248 & 0.213 & 0.023 \\
        \hline
        & & \multicolumn{4}{c}{Bivariate Gamma Distribution}\\
        \hline
        (0,\ 0) & (0,\ 0) & 0.024 & 0.020 & 0.019 & 0.017 \\
        (0,\ 0) & (0.1,\ 0) & 0.020 & 0.019 & 0.018 & 0.025\\
        (0,\ 0) & (0.1,\ 0.1) & 0.030 & 0.015 & 0.014 & 0.030\\
        (0,\ 0) & (0,\ 0.3) & 0.033 & 0.018 & 0.023 & 0.028
    \end{tabular}
    \end{center}
    \caption{Power for testing $H_0: \theta^{(1)}=\theta^{(2)}$ for bivariate Gaussian, bivariate $t$ (with 1 degree of freedom), and bivariate gamma distributions with different location parameters ($\theta^{(1)}$ and $\theta^{(2)}$), using the nonparametric Bayes (NPBayes) test, spatial sign test, spatial rank test and the Hotelling's $T^2$-test.}
    \label{3tab2}
\end{table}
\begin{table}[t]
\begin{center}
    \begin{tabular}{c|cccc}
        \hline
        $\theta$ & NPBayes & Sign Test & Rank Test & Hotelling's $T^2$ \\
        \hline
        & \multicolumn{4}{c}{10-variate Gaussian Distribution}\\
        \hline
        $v_0$ & 0.051 & 0.039 & 0.089 & 0.115\\
        $v_1$ & 0.192 & 0.171 & 0.115 & 0.560\\
        $v_2$ & 0.284 & 0.271 & 0.283 & 0.330\\
        \hline
        & \multicolumn{4}{c}{10-variate $t_1$ Distribution}\\
        \hline
        $v_0$ & 0.062 & 0.029 & 0.048 & 0.048 \\
        $v_1$ & 0.137 & 0.112 & 0.114 & 0.072\\
        $v_2$ & 0.348 & 0.340 & 0.332 & 0.138 \\
         \hline
        & \multicolumn{4}{c}{10-variate Gamma Distribution}\\
        \hline
        $v_0$ & 0.071 & 0.062 & 0.101 & 0.173\\
        $v_1$ & 0.262 & 0.122 & 0.358 & 0.925\\
        $v_2$ & 0.093 & 0.064 & 0.091 & 0.219\\
        \hline
    \end{tabular}
    \end{center}
    \caption{Power for testing $H_0: \theta(P)=v_0$ for 10-variate Gaussian, 10-variate $t$ (with 1 degree of freedom), and 10-variate gamma distributions for different location parameters $\theta=v_0,v_1,v_2$, with $v_0=(0,0,0,0,0,0,0,0,0,0)^T$, $v_1=(0,0,0,0,0,0.1,0.1,0.1,0.1,0.1)^T$, and $v_2=(0,0,0,0,0,0.1,0.1,0.1,-0.1,-0.1)^T$, using the nonparametric Bayes (NPBayes) test, spatial sign test, spatial rank test and the Hotelling's $T^2$-test.}
    \label{3tab3}
\end{table}
\begin{table}[t]
\begin{center}
    \begin{tabular}{c|c|cccc}
        \hline
      $\theta^{(1)}$  & $\theta^{(2)}$ & NPBayes & Sign Test & Rank Test & Hotelling's $T^2$ \\
        \hline
        & &  \multicolumn{4}{c}{10-variate Gaussian Distribution}\\
        \hline
        $v_0$ & $v_0$ & 0.054 & 0.046 & 0.038 & 0.048\\
        $v_0$ & $v_1$ & 0.172 & 0.087 & 0.109 & 0.138\\
        $v_0$ & $v_2$ & 0.101 & 0.150 & 0.080 & 0.136\\
        \hline
        & & \multicolumn{4}{c}{10-variate $t_1$ Distribution}\\
        \hline
        $v_0$ & $v_0$ & 0.059 & 0.056 & 0.036 & 0.034 \\
        $v_0$ & $v_1$ & 0.071 & 0.076 & 0.092 & 0.045\\
        $v_0$ & $v_2$ & 0.074 & 0.075 & 0.072 & 0.039 \\  
        \hline
        & & \multicolumn{4}{c}{10-variate Gamma Distribution}\\
        \hline
        $v_0$ & $v_0$ & 0.051 & 0.036 & 0.023 & 0.057 \\
        $v_0$ & $v_1$ & 0.054 & 0.051 & 0.045 & 0.070\\
        $v_0$ & $v_2$ & 0.132 & 0.064 & 0.125 & 0.037\\
        \hline
    \end{tabular}
    \end{center}
    \caption{Power for testing $H_0: \theta^{(1)}=\theta^{(2)}$ for 10-variate Gaussian, 10-variate $t$ (with 1 degree of freedom), and 10-variate gamma distributions with different location parameters $v_0=(0,0,0,0,0,0,0,0,0,0)^T,\ v_1=(0,0,0,0,0,0.1,0.1,0.1,0.1,0.1)^T$ and $v_2=(0,0,0,0,0,0.1,0.1,0.1,-0.1,-0.1)^T$, using the nonparametric Bayes (NPBayes) test, spatial sign test, spatial rank test and the Hotelling's $T^2$-test.}
    \label{3tab4}
\end{table}
\begin{remark}
Here we have considered tests for multivariate locations based on spatial medians, but these tests can be constructed using multivariate $\ell_1$-medians (with $\ell_p$-norms) as well. For some fixed $p>1$, the $\ell_1$-median for a $k$-variate distribution $P$ can be defined as
$$
\theta_p(P)=\argmin_{\theta \in \mathbb{R}^k}P\{\Vert Y-\theta\Vert_p-\Vert \theta \Vert_p\}.
$$
Bernstein-von Mises theorems of $\ell_1$-medians are available in the literature \citep{bhattacharya2019bayesian}. Hence the expressions for local asymptotic powers under shrinking alternatives can be obtained using those theorems.
\end{remark}
\begin{remark}
One may argue that, the Hotelling's $T^2$-test is designed for testing the mean vector, and hence is unsuitable as a competing method for nonparametric tests based on medians. However, people use Hotelling's $T^2$-statistic as a testing procedure for the location of a distribution, under the assumption of normality, for which the center of symmetry is same as both mean and median. Naturally, this assumption gets violated for non-normal distributions, for which nonparametric tests are more suitable.
\end{remark}
\section{Proofs}
\label{3S6}
We start off this section with the proofs of Lemma \ref{3l1}, and $\ref{3l2}$, and then proceed with the proofs of the main theorems. 
\begin{proof}[Proof of Lemma \ref{3l1}]
 Define $\theta(\mathbb{B}_n)=\argmin_\theta\mathbb{B}_n\Vert Y-\theta\Vert_2$, where $\mathbb{B}_n=\mathrm{DP}(n\mathbb{P}_n)$ is the Bayesian bootstrap distribution. It has been shown in Lemma 1 in \cite{bhattacharya2019bayesian} that, asymptotically, $\theta(P)$ is a Bayesian bootstrapped analog of a $Z$-estimator, which implies that, asymptotically, the posterior distribution of $\theta(P)$ is the same as the conditional distribution of $\theta(\mathbb{B}_n)$. Thus, our problem boils down to showing the consistency of the first and second moments of the bootstrap $Z$-estimator $\theta(\mathbb{B}_n)$.

\cite{cheng2015moment} proved the consistency of the bootstrap moment estimators for the class of exchangeably weighted bootstrap (see Section 2.2, \cite{cheng2015moment}). The Bayesian bootstrap weights fall into the class of the exchangeable bootstrap weights, and we have to show that the $\ell_1$-criterion function $m_\theta(y)=-\Vert y-\theta \Vert_2+\Vert y \Vert_2$ satisfies the following two sufficient conditions. Let $\mathbb{G}_n=\sqrt{n}(\mathbb{P}_n-P^\star)$ denotes the empirical process and $\mathbb{G}_n^\star=\sqrt{n}(\mathbb{B}_n-\mathbb{P}_n)$ denotes the bootstrap empirical process. Suppose that the following conditions hold.
\begin{enumerate}
    \item Let $\Theta$ be the compact parameter space. For any $\theta \in \Theta$, 
    $$
    P^\star(m_\theta-m_{\theta^\star})\lesssim -\Vert \theta-\theta^\star \Vert_2^2.
    $$
    \item Define $N_\delta=\{m_\theta-m_{\theta_0}: \Vert \theta-\theta \Vert_2 \leq \delta\}$. We have to show
    \begin{align}
    \label{eq318}
        \big(\mathrm{E}_X\Vert \mathbb{G}_n \Vert _{N_\delta}^{p^\prime}\big)^{1/p^\prime} &\lesssim \delta\\
        \big(\mathrm{E}_{XW}\Vert \mathbb{G}_n^\star \Vert_{N_\delta}^{p^\prime}\big)^{1/p^\prime} &\lesssim \delta,
    \end{align}
    for some $p^\prime > 2$.
\end{enumerate}
Then the assertion in Lemma \ref{3l1} holds. First, we need to show that the parameter space can be restricted to a compact subset of $\mathbb{R}^k$ with high probability. In Lemma 2 of \cite{bhattacharya2019bayesian}, it has been shown that for some $0<\epsilon<1/4$ and $K>0$ such that $P(\Vert Y \Vert_2\leq K)>1-\epsilon$, given $Y_1,\dots, Y_n$, $\Vert\theta(\mathbb{B}_n)\Vert_2 \leq 3K$ with high ${P^\star} ^n$-probability, which implies that asymptotically, given $Y_1,\dots,Y_n$, $\Vert\theta(P)\Vert_2 \leq 3K$ with high ${P^\star}^n$-probability.

After fixing $K>0$, we choose $\Theta=\{\theta: \Vert \theta \Vert_2 \leq 3K\}$. Since $\Theta$ is compact, Condition 1 can be shown from a Taylor series expansion around $\theta^\star$,
\begin{equation}\label{eq320}
        P^\star m_\theta-P^\star m_{\theta^\star}=(\theta-\theta^\star)^\prime P^\star\dot{m}_{\theta^\star}+\frac{(\theta-\theta^\star)^\prime V_{\theta^\star,P^\star}(\theta-\theta^\star)}{2}+o(\Vert \theta-\theta^\star \Vert^2).
    \end{equation}
    Since $\theta^\star$ is the maximizer of $P^\star m_\theta$, $P^\star \dot{m}_{\theta^\star}$ vanishes. The matrix $V_{\theta^\star,P^\star}$ is negative definite, and hence, the second term in the right hand side of \eqref{eq320} is bounded above by $-c\Vert \theta-\theta^\star \Vert_2^2$, for a positive constant $c$.
    
    Before proving Condition 2, we introduce some notations. For any class of functions $\mathcal{A}$, and metric $\ell$, its $\epsilon$-bracketing number is denoted as $N_{[\,]}(\epsilon,\mathcal{A},\ell)$. The corresponding bracketing entropy integral is defined as
    $$
    J_{[\,]}(\epsilon,\mathcal{A},\ell)= \int_0^\delta \sqrt{1+\log N_{[\,]}(\epsilon,\mathcal{A},\ell)}\mathrm{d}\epsilon.
    $$
     Following \cite{cheng2015moment}, a simple sufficient condition for \eqref{eq318} is the following global Lipschitz condition
    \begin{align}\label{eq321}
       \vert m_\theta(x)-m_{\theta^\star}(x)\vert \leq & \Vert \theta-\theta^\star \Vert_2 ,
    \end{align}
    for any $\theta \in \Theta$, and
    \begin{align}\label{eq322}
        J_{[\,]}(1,N_\delta,L_2(P^\star))+ \Vert M \Vert_{\psi_{p^\prime }} < \infty,
    \end{align}
    for some $p^\prime >2$, where $\Vert \cdot \Vert_{\psi_p}$ is the Orlicz norm with respect to the convex function $\psi_p(t)=\exp {(t^p-1)}$. In our case, \eqref{eq321} holds by the triangle inequality $\vert m_\theta(y)-m_{\theta^\star}(y)\vert \leq \Vert \theta-\theta^\star \Vert_2$. Since $M(y)=1$ for every $y$, we just have to show that $J_{[\,]}(1,N_\delta, L_2(P^\star)) < \infty$.
    
    By Example 19.7 of \cite{van2000asymptotic}, since $\vert m_\theta(y)-m_{\theta^\prime}(y)\vert \leq \Vert \theta-\theta^\prime \Vert_2$, for every $\theta,\ \theta^\prime \in \Theta$, there exists a constant $K$ such that 
    $$
    N_{[\,]}(1,N_\delta,L_2(P^\star)) \leq \bigg(\frac{\mathrm{diam}\ \Theta} {\epsilon}\bigg)^k,\ \text{for every}\ 0<\epsilon<\mathrm{diam}\ \Theta.
    $$
    Then, the entropy is of the order $\log(1/\epsilon)$. By a change of variable, it can be shown that $J_{[\,]}(1,N_\delta, L_2(P^\star)) < \infty$.
\end{proof}
\begin{proof}[Proof of Lemma \ref{3l2}]
From Theorem 3.1 of \cite{bhattacharya2019bayesian}, for $j=1,2$,
\begin{enumerate}[label={\upshape(\roman*)}]
\item $\sqrt{n_j}(\hat{\theta}_{n_j}^{(j)}-{\theta^\star}^{(j)}) \rightsquigarrow \mathrm{N}_k(0, V_{{\theta^\star}^{(j)},{P^\star}^{(j)}}^{-1}U_{{\theta^\star}^{(j)},{P^\star}^{(j)}}V_{{\theta^\star}^{(j)},{P^\star}^{(j)}}^{-1})$,
\item Given $Y_1^{(j)},\dots,Y_{n_j}^{(j)}$,
\begin{align*}
\sqrt{n_j}(\theta(P^{(j)})-\hat{\theta}_{n_j}^{(j)}) \rightsquigarrow \mathrm{N}_k(0, V_{{\theta^\star}^{(j)},{P^\star}^{(j)}}^{-1}U_{{\theta^\star}^{(j)},{P^\star}^{(j)}}V_{{\theta^\star}^{(j)},{P^\star}^{(j)}}^{-1}),
\end{align*}
\end{enumerate}
in ${P^\star}^{(j)}$ probability. From the independence of the two samples, the conclusion follows.
\end{proof}
\begin{proof}[Proof of Theorem \ref{3thm1}]
The probability of accepting the null hypothesis under the null distribution is
\begin{align*}
P_{\theta_0}(\theta_0 \in C(Y_1,\dots,Y_n))=& P_{\theta_0}((\bar{\theta}-\theta_0)^\prime S^{-1}(\bar{\theta}-\theta_0) \leq r_{1-\alpha}).
\end{align*}
Using Lemma \ref{3l1} and Theorem 3.1 of \cite{bhattacharya2019bayesian}, 
\begin{equation}\label{eq3223}
n(\bar{\theta}-\theta_0)^T S^{-1}(\bar{\theta}-\theta_0)\rightsquigarrow \chi_k^2,
\end{equation}
which implies that, under $H_0$, $r_{1-\alpha}=\chi^2_{k;\alpha}+o_{P_{\theta_0}}(1)$. The weak convergence in \eqref{eq3223} uses the fact that if $X \sim \mathrm{N}_k(0,I_k)$, then $X^TX \sim \chi_k^2$. Next, again using Lemma \ref{3l1}, Theorem 3.1 of \cite{bhattacharya2019bayesian} and Slutsky's theorem,
\begin{align*}
P_{\theta_0}(\theta_0 & \in C(Y_1,\dots,Y_n))= P_{\theta_0}((\bar{\theta}-\theta_0)^\prime S^{-1}(\bar{\theta}-\theta_0) \leq r_{1-\alpha})\\
=& P_{\theta_0}((\hat{\theta}_n-\theta_0)^\prime (V_{\theta_0,P_{\theta_0}}^{-1}U_{\theta_0,P_{\theta_0}}V_{\theta_0,P_{\theta_0}}^{-1})^{-1}(\hat{\theta}_n-\theta_0) + o_{P_{\theta_0}}(1) \leq \ \chi^2_{k;\alpha}+o_{P_{\theta_0}}(1))\\
&\rightarrow  1-\alpha.
\end{align*}
\end{proof}
\begin{proof}[Proof of Theorem \ref{3thm2}]
The proof is similar to that of Theorem \ref{3thm1}. Using Lemma \ref{3l2}, under $H_0$, $\sqrt{n}(\hat{\theta}_{n_1}^{(1)}-\hat{\theta}_{n_2}^{(2)})$ converges to a Gaussian distribution with mean $0$ and covariance matrix $\lambda^{-1}V_{\theta_0,P_{\theta_0}^{(1)}}^{-1}U_{\theta_0,P_{\theta_0}^{(1)}}V_{\theta_0,P_{\theta_0}^{(1)}}^{-1}+(1-\lambda)^{-1}V_{\theta_0,P_{\theta_0}^{(2)}}^{-1}U_{\theta_0,P_{\theta_0}^{(2)}}V_{\theta_0,P_{\theta_0}^{(2)}}^{-1}$. Using Lemma \ref{3l1}, Lemma \ref{3l2} and Slutsky's theorem, 
$$
n(\bar{\theta}^{(1)}-\bar{\theta}^{(2)})^\prime (S^{(1)}+S^{(2)})^{-1}(\bar{\theta}^{(1)}-\bar{\theta}^{(2)}) \rightsquigarrow\chi_k^2,
$$
which implies that, under $H_0$, $r_{1-\alpha}=\chi^2_{k;\alpha}+o_{P_{\theta_0}^{(1)}}(1)+o_{P_{\theta_0}^{(2)}}(1)$. Next, using Lemma \ref{3l2}, under $H_0$,
\begin{align*}
    (& P_{\theta_0}^{(1)}\times  P_{\theta_0}^{(2)})[(\bar{\theta}^{(1)}-\bar{\theta}^{(2)})^\prime (S^{(1)}+S^{(2)})^{-1}(\bar{\theta}^{(1)}-\bar{\theta}^{(2)}) \leq r_{1-\alpha}]\\
    &= (P_{\theta_0}^{(1)}\times P_{\theta_0}^{(2)})(n(\hat{\theta}_{n_1}^{(1)}-\hat{\theta}_{n_2}^{(2)})^\prime (\frac{1}{\lambda}V_{\theta_0,P_{\theta_0}^{(1)}}^{-1}U_{\theta_0,P_{\theta_0}^{(1)}}V_{\theta_0,P_{\theta_0}^{(1)}}^{-1}\\
    &+\frac{1}{1-\lambda}V_{\theta_0,P_{\theta_0}^{(2)}}^{-1}U_{\theta_0,P_{\theta_0}^{(2)}}V_{\theta_0,P_{\theta_0}^{(2)}}^{-1})^{-1}(\hat{\theta}_{n_1}^{(1)}-\hat{\theta}_{n_2}^{(2)})
    +o_{P_{\theta_0}^{(1)}}(1)+o_{P_{\theta_0}^{(2)}}(1)\\
    &\leq\chi_{k;\alpha}^2+o_{P_{\theta_0}^{(1)}}(1)+o_{P_{\theta_0}^{(2)}}(1))
     \rightarrow 1-\alpha.
\end{align*}
\end{proof}
\begin{proof}[Proof of Theorem \ref{3thm3}]
It is well known that the models $P_{\theta_0}^n$ and $P_{\theta_0+h/\sqrt{n}}^n$ are mutually contiguous for DQM models (Example 6.5, \cite{van2000asymptotic}). Under $H_0$, using Theorem 5.23 of \cite{van2000asymptotic}, $\hat{\theta}_n$ can be written as
\begin{equation}\label{eq24}
\sqrt{n}(\hat{\theta}_n-\theta_0)=-\frac{1}{\sqrt{n}}\sum_{i=1}^n V_{\theta_0,P_{\theta_0}}^{-1}\frac{Y_i-\theta_0}{\Vert Y_i-\theta_0 \Vert_2}+ o_{P^n_{\theta_0}}(1).
\end{equation}
Let $L_n$ denote the log-likelihood ratio $L_n=\log(\mathrm{d}P_{\theta_0+h/\sqrt{n}}^n/\mathrm{d}P_{\theta_0}^n)$. Using Equation \eqref{eq312}, $L_n$ is written as
\begin{equation}\label{eq25}
    L_n= \frac{1}{\sqrt{n}}h^T\sum_{i=1}^n\dot{\ell}_{\theta_0}(Y_i)-\frac{1}{2}h^TI_{\theta_0}h+o_{P^n_{\theta_0}}(1).
\end{equation}
Putting together Equations \eqref{eq24} and \eqref{eq25}, we have
\begin{equation*}
 (\sqrt{n}(\hat{\theta}_n-\theta_0),L_n)) =\Big(-\frac{1}{\sqrt{n}}\sum_{i=1}^n V_{\theta_0,P_{\theta_0}}^{-1}\frac{Y_i-\theta_0}{\Vert Y_i-\theta_0 \Vert_2}, \frac{1}{\sqrt{n}}h^T\sum_{i=1}^n\dot{\ell}_{\theta_0}(Y_i)-\frac{1}{2}h^TI_{\theta_0}h\Big)+o_{P^n_{\theta_0}}(1).
\end{equation*}
By the central limit theorem, $(\sqrt{n}(\hat{\theta}_n-\theta_0), L_n)$ tends to a $(k+1)$-dimensional Gaussian distribution with mean zero and covariance 
$$
\delta_1=P_{\theta_0}\Big(-V_{\theta_0,P_{\theta_0}}^{-1} \frac{Y-\theta_0}{\Vert Y-\theta_0 \Vert_2}\{\dot{\ell}_{\theta_0}(Y)\}^T h\Big).
$$
Then by Le Cam's third lemma (Example 6.7, \cite{van2000asymptotic}), under $P_{\theta_0+h/\sqrt{n}}$, $\sqrt{n}(\hat{\theta}_n-\theta_0)$ converges weakly to a Gaussian distribution with mean $\delta_1$. Following the arguments used in Theorem \ref{3thm1}, the local asymptotic power of the test is given by
\begin{align*}
    P_{\theta_0+h/\sqrt{n}}&(\bar{\theta}-\theta_0)^\prime S^{-1}(\bar{\theta}-\theta_0) \leq  r_{1-\alpha}) =\\ &P_{\theta_0+h/\sqrt{n}}(n(\hat{\theta}_n-\theta_0)^\prime (V_{\theta_0,P_{\theta_0}}^{-1}U_{\theta_0,P_{\theta_0}}V_{\theta_0,P_{\theta_0}}^{-1})^{-1}(\hat{\theta}_n-\theta_0)+ o_{P_{\theta_0}}(1)
    &\leq \chi_{k;\alpha}^2+o_{P_{\theta_0}}(1)).
\end{align*}
Under $P_{\theta_0+h/\sqrt{n}}$, $n(\hat{\theta}_n-\theta_0)^\prime (V_{\theta_0,P_{\theta_0}}^{-1}U_{\theta_0,P_{\theta_0}}V_{\theta_0,P_{\theta_0}}^{-1})^{-1}(\hat{\theta}_n-\theta_0)$ tends to a chi-square distribution with the non-centrality parameter $\delta_1^\prime (V_{\theta_0,P_{\theta_0}}^{-1}U_{\theta_0,P_{\theta_0}}V_{\theta_0,P_{\theta_0}}^{-1})^{-1} \delta_1$, which gives us the asymptotic power given in the statement of Theorem \ref{3thm3}.
\end{proof}
\begin{proof}[Proof of Theorem \ref{3thm4}]
The proof proceeds along the lines of Theorem \ref{3thm3}. The models $\{{P_{\theta_0}^{(1)}}\times P_{\theta_0}^{(2)}\}$ and $\{P_{\theta_0+h_1/\sqrt{n_1}}^{(1)}\times P_{\theta_0+h_2/\sqrt{n_2}}^{(2)}\}$ are mutually contiguous. From Theorem 5.23 of \cite{van2000asymptotic}, sample spatial medians have the following linearizations,
\begin{align}
\label{eq324}
    \sqrt{n_1}(\hat{\theta}_{n_1}^{(1)}-\theta_0)=& -\frac{1}{\sqrt{n_1}}\sum_{i=1}^{n_1}V_{\theta_0,P_{\theta_0}^{(1)}}^{-1}\frac{Y_i^{(1)}-\theta_0}{\Vert Y_i^{(1)}-\theta_0 \Vert_2}+o_{P_{\theta_0}^{(1)}}(1),\\
    \label{eq325}
    \sqrt{n_2}(\hat{\theta}_{n_2}^{(2)}-\theta_0)=& -\frac{1}{\sqrt{n_2}}\sum_{i=1}^{n_2}V_{\theta_0,P_{\theta_0}^{(2)}}^{-1}\frac{Y_i^{(2)}-\theta_0}{\Vert Y_i^{(2)}-\theta_0 \Vert_2}+o_{P_{\theta_0}^{(2)}}(1).
\end{align}
Subtracting \eqref{eq325} from \eqref{eq324}, under $H_0$,
\begin{align*}
\sqrt{n}(\hat{\theta}_{n_1}^{(1)}-&\hat{\theta}_{n_2}^{(2)})=-\bigg\{\frac{1}{\sqrt{n_1\lambda}}\sum_{i=1}^{n_1}V_{\theta_0,P_{\theta_0}^{(1)}}^{-1}\frac{ Y_i^{(1)}-\theta_0}{\Vert Y_i^{(1)}-\theta_0 \Vert_2}\\
&-\frac{1}{\sqrt{n_2(1-\lambda)}}\sum_{i=1}^{n_2}V_{\theta_0,P_{\theta_0}^{(2)}}^{-1}\frac{ Y_i^{(2)}-\theta_0}{\Vert Y_i^{(2)}-\theta_0 \Vert_2}\bigg\}+
o_{P_{\theta_0}^{(1)}}(1)+o_{P_{\theta_0}^{(2)}}(1).
\end{align*}
Define the log-likelihood ratio $L_{n_1,n_2}^\prime=\log(\mathrm{d}P_{\theta_0+h_1/\sqrt{n_1}}^{(1)}\mathrm{d}P_{\theta_0+h_2/\sqrt{n_2}}^{(2)}/\mathrm{d}P_{\theta_0}^{(1)}\mathrm{d}P_{\theta_0}^{(2)})$, which looks like
\begin{align*}
L_{n_1,n_2}^\prime=\frac{1}{\sqrt{n_1}}h_1^T\sum_{i=1}^{n_1}\dot{\ell}_{\theta_0}^{(1)}(Y_i^{(1)})-\frac{1}{2}h_1^TI_{\theta_0}^{(1)}h_1+\frac{1}{\sqrt{n_2}}h_2^T\sum_{i=1}^{n_2}\dot{\ell}_{\theta_0}^{(2)}(Y_i^{(2)})-\frac{1}{2}h_2^TI_{\theta_0}^{(2)}h_2\\
+o_{P_{\theta_0}^{(1)}}(1)+ o_{P_{\theta_0}^{(2)}}(1).
\end{align*}
By central limit theorem, the joint distribution of $\sqrt{n}(\hat{\theta}_{n_1}^{(1)}-\hat{\theta}_{n_2}^{(2)})$ and $L_{n_1,n_2}^\prime$ tends to a $(k+1)$-dimensional Gaussian distribution with mean zero and covariance 
\begin{equation*}
\begin{split}
\delta_2=\frac{1}{\sqrt{\lambda}}P_{\theta_0}^{(1)}\Big(-V_{\theta_0,P_{\theta_0}^{(1)}}^{-1} \frac{ Y^{(1)}-\theta_0 }{\Vert Y^{(1)}-\theta_0 \Vert_2}\{\dot{\ell_{\theta_0}^{(1)}}(Y^{(1)})\}^{T} h_1\Big)+\\
\frac{1}{\sqrt{1-\lambda}}P_{\theta_0}^{(2)} \Big(-V_{\theta_0,P_{\theta_0}^{(2)}}^{-1}\displaystyle \frac{ Y^{(2)}-\theta_0}{\Vert Y^{(2)}-\theta_0 \Vert_2}\{\dot{\ell_{\theta_0}^{(2)}}(Y^{(2)}\}^{T} h_2\Big).
\end{split}
\end{equation*}
By Le Cam's third lemma, under $P_{\theta_0+h_1/\sqrt{n_1}}^{(1)} \times P_{\theta_0+h_2/\sqrt{n_2}}^{(2)}$, $\sqrt{n}(\hat{\theta}^{(1)}_{n_1}-\hat{\theta}_{n_2}^{(2)})$ converges weakly to a Gaussian distribution with mean $\delta_2$. Thus following the arguments used in Theorem \ref{3thm2}, the asymptotic power is given by
\begin{equation*}
\begin{split}
    P_{\theta_0+h_1/\sqrt{n_1}}^{(1)}& \times  P_{\theta_0+h_2/\sqrt{n_2}}^{(2)}\{(\bar{\theta}_{n_1}^{(1)}-\bar{\theta}_{n_2}^{(2)})^\prime (S^{(1)}+S^{(2)})^{-1}(\bar{\theta}_{n_1}^{(1)}-\bar{\theta}_{n_2}^{(2)}) \leq r_{1-\alpha}\}\\
    &=P_{\theta_0+h_1/\sqrt{n_1}}^{(1)}\times  P_{\theta_0+h_2/\sqrt{n_2}}^{(2)}\{n(\hat{\theta}_{n_1}^{(1)}-\hat{\theta}_{n_2}^{(2)})^\prime (\lambda^{-1}V_{\theta_0,P_{\theta_0}^{(1)}}^{-1}U_{\theta_0,P_{\theta_0}^{(1)}}V_{\theta_0,P_{\theta_0}^{(1)}}^{-1}\\
    &+(1-\lambda)^{-1}V_{\theta_0,P_{\theta_0}^{(2)}}^{-1}U_{\theta_0,P_{\theta_0}^{(2)}}V_{\theta_0,P_{\theta_0}^{(2)}}^{-1})^{-1}(\hat{\theta}_{n_1}^{-1}-\hat{\theta}_{n_2}^{(2)})+o_{P_{\theta_0}^{(1)}}(1)+ o_{P_{\theta_0}^{(2)}}(1)\\
    &\leq \chi^2_{k;\alpha}+o_{P_{\theta_0}^{(1)}}(1)+ o_{P_{\theta_0}^{(2)}}(1)\}.
\end{split}
\end{equation*}
Therefore under $P_{\theta_0+h_1/\sqrt{n_1}}^{(1)}\times P_{\theta_0+h_2/\sqrt{n_2}}^{(2)}$, 
\begin{align*}
n(\hat{\theta}_{n_1}^{(1)}-\hat{\theta}_{n_2}^{(2)})^\prime (\frac{1}{\lambda}V_{\theta_0,P_{\theta_0}^{(1)}}^{-1}U_{\theta_0,P_{\theta_0}^{(1)}}V_{\theta_0,P_{\theta_0}^{(1)}}^{-1}+\frac{1}{1-\lambda}V_{\theta_0,P_{\theta_0}^{(2)}}^{-1}U_{\theta_0,P_{\theta_0}^{(2)}}V_{\theta_0,P_{\theta_0}^{(2)}}^{-1})^{-1}(\hat{\theta}_{n_1}^{(1)}-\hat{\theta}_{n_2}^{(2)})
\end{align*}
tends to a non-central chi-square distribution with the non-centrality parameter $\delta_2^\prime (\lambda^{-1} V_{\theta_0,P_{\theta_0}^{(1)}}^{-1}\linebreak U_{\theta_0,P_{\theta_0}^{(1)}}V_{\theta_0,P_{\theta_0}^{(1)}}^{-1}+(1-\lambda)^{-1}V_{\theta_0,P_{\theta_0}^{(2)}}^{-1}U_{\theta_0,P_{\theta_0}^{(2)}}V_{\theta_0,P_{\theta_0}^{(2)}}^{-1})^{-1}\delta_2$, which gives the asymptotic power.
\end{proof}
\section{Discussion}
The nonparametric Bayesian tests constructed here are interesting alternatives to the classical tests for multivariate location, because they are easy to construct and do not require asymptotics to set their critical values. Also, unlike the classical tests, our methods do not need the assumption of elliptical symmetry to determine the rejection criterion.

Theorems 1 and 2 show that the tests are asymptotically nonparametric, which means that the type I errors do not depend on the true distribution for large samples. Also, Theorems 3 and 4 investigate the asymptotic power under shrinking alternatives, which are motivated by arguing that a testing procedure should be powerful at values close to the truth. Besides having useful asymptotic properties, our tests are computationally simple and efficient as well. The Bayesian bootstrap approximation to the posterior Dirichlet process is quite useful since the credible regions can be constructed using simple Monte Carlo methods. We have investigated the performance of the tests in both relatively lower and higher dimensional settings, for balanced and unbalanced data sizes (for the two-sample test), and for correlated data, and the tests exhibit reasonable power in all scenarios.
\ifthenelse{1=1}{
\bibliographystyle{apalike}
\bibliography{sample.bib}
}{

}
\end{document}